\documentclass[11pt]{amsart}
\usepackage[utf8]{inputenc}
\usepackage{graphicx}
\usepackage{amssymb}
\usepackage{amscd}
\usepackage{hyperref}
\usepackage{amsmath,amsthm}
\usepackage{latexsym}
\usepackage{caption}
\usepackage{color}
\usepackage{cite}

\newtheorem{theorem}{Theorem}
\newtheorem{lemma}[theorem]{Lemma}
\newtheorem{corollary}[theorem]{Corollary}
\newtheorem{definition}{Definition}
\newtheorem{remark}{Remark}
\newtheorem{proposition}[theorem]{Proposition}
\newtheorem{exa}{Example}

\def\Z{\mathbb{Z}}

\def\Q{\mathbb{Q}}

\begin{document}
\title[Differences of squares of upper-triangular  matrices]{Differences of two  squares of upper-triangular $2\times 2$ integer matrices}
\author[Andrej Dujella, Zrinka Franu\v{s}i\'{c}]{Andrej Dujella, Zrinka Franu\v{s}i\'{c}}

\address{
Department of Mathematics\\
Faculty of Science\\
University of Zagreb\\
Bijeni{\v c}ka cesta 30, 10000 Zagreb, Croatia}

\email{duje@math.hr,\ fran@math.hr}

\keywords{upper triangular matrices, difference of squares, Diophantine equations, congruence classification}

\subjclass{11C20, 15A24, 11A07}


\begin{abstract}
We consider the problem of characterizing upper-triangular matrices
$M=\begin{pmatrix}p&r\\0&q\end{pmatrix}\in M_2(\mathbb Z)$
which can be represented in the form $A^2-B^2$ with upper-triangular integer matrices $A$ and $B$ and  give a complete criterion in terms of representations of $p$ and $q$ as differences of two squares and an additional divisibility condition on $r$. Also, we give a complete classification of representable matrices in terms of congruence conditions on $p$, $q$, and $r$.
\end{abstract}

\maketitle

\section{Introduction and motivation}

The classical problem of deciding which integers are representable as a difference of two squares is well known: an integer $n$ can be written in the form $n=x^2-y^2$ with $x,y\in\mathbb Z$ if and only if $n\not\equiv 2\pmod 4$.  Analogous questions can be considered in other rings. For example, a Gaussian integer $z=a+bi$ can be represented as a difference of two squares of Gaussian integers if and only if $b$ is even and $(a,b)\not\equiv(2,2)\pmod4$. In \cite{duje-fra}, the analogous problem in rings of integers of quadratic fields $\Q(\sqrt d)$ was studied. More precisely, a complete characterization of  elements that  can be represented as a difference of two squares  is obtained for those integers $d$ that satisfy certain conditions expressed in terms of the solvability  of certain Pellian equations. 

One of the main motivations for studying representations as differences of squares comes from their close connection with the \textit{problem of the existence of Diophantine quadruples}, i.e., $D(n)$-quadruples -- sets of four elements in a commutative ring such that the product of any two distinct elements, increased by $n$, is a perfect square.

In several papers dealing with $D(n)$-quadruples in rings of algebraic integers, representations of elements as differences of two squares naturally arise as a preparatory step in the analysis. This approach appears, for example, in  \cite{zr-mis,zr-soldo,zr-borka,ljerka}, where the structure of elements representable as differences of squares plays an important role in determining the existence of $D(n)$-quadruples in various number fields.

In some commutative rings it turns out that the existence of a $D(n)$-quadruple is equivalent to the representability of $n$ as a difference of two squares, while in other settings this equivalence fails. Counterexamples to such a conjectural connection were recently obtained in \cite{cgh,cgh2,gup}. 

A further connection between differences of squares and $D(n)$-quadruples is given in \cite{az}, where it is shown that there is no polynomial $D(n)$-quadruple in $\Z[X]$ (a ring of polynomials with integer coefficients) for certain polynomials $n\in \Z[X]$ that cannot be represented as a difference of two squares of polynomials in $\Z[X]$.

These results motivate further study of representations as differences of squares in various algebraic structures. An interesting setting in which to study the existence of $D(n)$-tuples is the \textit{ring of upper triangular $2\times 2$ integer matrices}, i.e., matrices of the form
$$ T=\begin{pmatrix}a&b\\0&c\end{pmatrix},~ a,b,c\in\Z.$$
We denote this set by $UT_2(\Z)$. Note that $UT_2(\Z)$ forms a ring with respect to the usual matrix addition and multiplication. Although this ring is not commutative, squares of upper triangular matrices are particularly easy to compute. Indeed,
$$ T^2=\begin{pmatrix}a^2&b(a+c)\\0&c^2\end{pmatrix}.$$
So, this simple structure makes  $UT_2(\Z)$ a convenient framework for studying representations as differences of squares and their connection with $D(n)$-tuples.  Since representations as differences of two squares often play an important role in the study of $D(n)$-tuples, and the present paper provides a natural starting point for such investigations.  In forthcoming work, we intend to study the relationship between representability of an upper-triangular matrix $N$ as a difference of two squares and the existence of $D(N)$-quadruples of upper-triangular matrices with respect to the Jordan product
$$ A\circ B=\frac12(AB+BA).$$

For $A=\begin{pmatrix}a&b\\0&d\end{pmatrix}$ and 
$B=\begin{pmatrix}x&y\\0&u\end{pmatrix}$ in $UT_2(\Z)$, a direct computation gives
\begin{equation}\label{e.1}
    A^2-B^2=
\begin{pmatrix}
 a^2-x^2 & b(a+d)-y(x+u)\\
 0 & d^2-u^2
\end{pmatrix}.
\end{equation}
In view of the classical characterization over $\Z$, i.e. of integers representable as differences of two squares, relation \eqref{e.1} immediately yields a necessary condition for a matrix in $UT_2(\Z)$ to be expressible as a difference of two squares. Namely, if $T=\begin{pmatrix}p&r\\0&q\end{pmatrix}$ can be written in the form $A^2-B^2$ for some $A,B\in UT_2(\Z)$, then both diagonal entries $p$ and $q$ must be representable as differences of two integer squares, i.e.
$$ p\not\equiv2\pmod 4,~ q\not\equiv2\pmod 4.$$
Therefore, any upper-triangular integer matrix having at least one diagonal entry congruent to 2 modulo 4 cannot be represented as a difference of two squares in $UT_2(\Z)$. 

The upper-right entry in \eqref{e.1} introduces an additional arithmetic condition linking the representations of the diagonal entries. Our first main result (Theorem~\ref{thm:main-criterion}) shows that this condition reduces to the solvability of a linear Diophantine equation and can be expressed as a gcd divisibility condition. We conclude the paper with a complete classification of representable matrices in terms of congruence conditions on $p$, $q$, and $r$ (Theorem~\ref{thm:final-classification}).  The proof of the final classification combines the analysis of congruence obstructions modulo 4 and modulo 16 with explicit constructions of representations in the admissible cases (Propositions \ref{p1}, \ref{prop:odd-diagonal}, \ref{prop:mixed-cases}, \ref{prop:4k-4n-complete}).

 Analogous results also hold for the ring $LT_2(\Z)$ of lower-triangular $2\times 2$ integer matrices.

\section{The main criterion}

\begin{theorem}\label{thm:main-criterion}
The upper-triangular matrix
$T=\begin{pmatrix}p&r\\0&q\end{pmatrix}$ in $UT_2(\Z)$
can be represented as a difference of two squares in $UT_2(\Z)$  if and only if there exist integers $a,x,d,u$ such that
$$p=a^2-x^2,\qquad q=d^2-u^2,$$
and either
$$(a+d,x+u)\neq(0,0)\quad\text{and}\quad \gcd(a+d,x+u)\mid r,$$
or
$$a+d=0,\quad x+u=0,\quad r=0.$$
\end{theorem}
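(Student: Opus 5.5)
The plan is to read everything off the explicit formula \eqref{e.1}. The representation $T=A^2-B^2$ with $A=\begin{pmatrix}a&b\\0&d\end{pmatrix}$ and $B=\begin{pmatrix}x&y\\0&u\end{pmatrix}$ is equivalent to the three scalar equations
$$p=a^2-x^2,\qquad q=d^2-u^2,\qquad r=b(a+d)-y(x+u).$$
The diagonal equations involve only $a,x,d,u$, while $b$ and $y$ appear only in the third equation, and there linearly. So the whole question reduces to this: once $a,x,d,u$ with $p=a^2-x^2$ and $q=d^2-u^2$ are fixed, for which $r$ is the linear Diophantine equation $r=b(a+d)-y(x+u)$ in the unknowns $b,y$ solvable?

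For necessity, suppose $T=A^2-B^2$ and write $A,B$ as above. By \eqref{e.1}, the integers $a,x,d,u$ satisfy $p=a^2-x^2$ and $q=d^2-u^2$, and $r=b(a+d)-y(x+u)$. There are two cases.
\begin{itemize}
\item If $(a+d,x+u)\neq(0,0)$, then $g=\gcd(a+d,x+u)$ is a positive integer dividing both $a+d$ and $x+u$. Hence $g$ divides the integer combination $b(a+d)-y(x+u)=r$.
\item If $a+d=x+u=0$, then $r=b\cdot 0-y\cdot 0=0$.
\end{itemize}
So one of the two alternatives of the theorem holds.

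For sufficiency, take $a,x,d,u$ as in the statement; we must produce $b,y\in\Z$ with $b(a+d)-y(x+u)=r$.
\begin{itemize}
\item In the case $a+d=x+u=0$ and $r=0$, any $b,y$ work, say $b=y=0$.
\item In the case $g=\gcd(a+d,x+u)\mid r$ with $(a+d,x+u)\ne(0,0)$, Bézout's identity gives integers $s,t$ with $s(a+d)+t(x+u)=g$. Setting $b=s\,r/g$ and $y=-t\,r/g$ yields $b(a+d)-y(x+u)=r$. This also covers the degenerate subcases where one of $a+d$, $x+u$ is zero, since then $g$ is the absolute value of the other.
\end{itemize}
Then $A=\begin{pmatrix}a&b\\0&d\end{pmatrix}$ and $B=\begin{pmatrix}x&y\\0&u\end{pmatrix}$ satisfy $A^2-B^2=T$ by \eqref{e.1}.

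There is no real obstacle here. The only point needing care is the convention $\gcd(0,0)$, and that is exactly why the statement separates the case $a+d=x+u=0$ and treats it on its own.
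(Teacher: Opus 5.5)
Your proof is correct and follows the paper's argument: you prove necessity by reading $p=a^2-x^2$, $q=d^2-u^2$, $r=b(a+d)-y(x+u)$ off \eqref{e.1}, and sufficiency by solving the linear Diophantine equation in $b,y$, with the case $a+d=x+u=0$ treated separately. The only difference is that you write out the B\'ezout solution $b=s\,r/g$, $y=-t\,r/g$ explicitly, where the paper simply cites solvability.
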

\begin{proof} $\boxed{\Rightarrow}$ : 
Assume  that $T=A^2-B^2$ for  upper-triangular matrices
$A=\begin{pmatrix}a&b\\0&d\end{pmatrix}$, $B=\begin{pmatrix}x&y\\0&u\end{pmatrix}$.
By \eqref{e.1}, we have 
$$p=a^2-x^2, \qquad q=d^2-u^2, \qquad r=b(a+d)-y(x+u).$$
If $(a+d,x+u)\neq(0,0)$, then $r$ is an integer linear combination of $a+d$ and $x+u$, and hence
$$\gcd(a+d,\,x+u)\mid r.$$
If $a+d=0$ and $x+u=0$, then the above relation reduces to $r=0$.

$\boxed{\Leftarrow}$ : Suppose there exist integers $a,x,d,u$ such that
$$p=a^2-x^2, \qquad q=d^2-u^2,$$
and either $(a+d,x+u)\neq(0,0)$ with $\gcd(a+d,x+u)\mid r$, or $a+d=x+u=0$ and $r=0$. If $(a+d,x+u)\neq(0,0)$, then the linear Diophantine equation
$$b(a+d)-y(x+u)=r$$
has an integer solution $(b,y)$. If $a+d=x+u=0$ and $r=0$, then any integers $b,y$ satisfy the equation. Defining $A=\begin{pmatrix}a&b\\0&d\end{pmatrix}$ and 
$B=\begin{pmatrix}x&y\\0&u\end{pmatrix}$, relation \eqref{e.1} yields $A^2-B^2=T$.
\end{proof}

\begin{corollary}\label{cor:diag-obstruction}
If $p\equiv 2\pmod 4$ or $q\equiv 2\pmod 4$, then $T=\begin{pmatrix}p&r\\0&q\end{pmatrix}$ in $UT_2(\Z)$ cannot be represented as a difference of two squares in $UT_2(\Z)$.
\end{corollary}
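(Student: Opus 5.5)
This follows directly from Theorem~\ref{thm:main-criterion}, using the classical description of integers that are differences of two squares. The plan is to argue by contraposition.

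Suppose $T=\begin{pmatrix}p&r\\0&q\end{pmatrix}$ is a difference of two squares in $UT_2(\Z)$. The forward direction of Theorem~\ref{thm:main-criterion} (equivalently, reading off the diagonal of \eqref{e.1}) gives integers $a,x,d,u$ with
$$p=a^2-x^2,\qquad q=d^2-u^2.$$
So it is enough to show that no difference of two integer squares is congruent to $2$ modulo $4$.

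For this we check squares modulo $4$. Every square is congruent to $0$ or $1$ modulo $4$, since $(2k)^2=4k^2$ and $(2k+1)^2=4k^2+4k+1$. Hence $a^2-x^2$ is congruent to one of $0-0,\ 0-1,\ 1-0,\ 1-1$, that is, to $0$, $3$, $1$ or $0$ modulo $4$. It is never $2$ modulo $4$. The same holds for $d^2-u^2$.

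Therefore $p\not\equiv 2\pmod 4$ and $q\not\equiv 2\pmod 4$. Contrapositively, if either diagonal entry is $\equiv 2\pmod 4$, then $T$ is not representable as a difference of two squares, whatever the value of $r$.

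There is no real obstacle here. The only point to note is that the argument uses just the necessary direction of the theorem, so the gcd condition on $r$ plays no role.
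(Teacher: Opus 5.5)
Your proof is correct and is essentially the paper's argument: by \eqref{e.1}, the diagonal entries of $A^2-B^2$ are $a^2-x^2$ and $d^2-u^2$, and no difference of two integer squares is $\equiv 2 \pmod 4$. The only difference is that you verify that classical fact via squares modulo $4$, whereas the paper just quotes it in the introduction.
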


\begin{remark}
Theorem~\ref{thm:main-criterion} shows that the problem splits into two parts:
\begin{enumerate}
\item represent the diagonal entries $p$ and $q$ as differences of two squares;
\item choose such representations so that $\gcd(a+d,x+u)$ divides the upper-right entry $r$.
\end{enumerate}
Thus, the essential difficulty lies in the interaction between the chosen representations of $p$ and $q$. The greatest flexibility for the entry $r$ is obtained when this gcd is as small as possible.
\end{remark}

\begin{definition}\label{def:g}
For integers $p$ and $q$ representable as differences of two squares, define
$$g(p,q):=\min \bigl\{\gcd(a+d,x+u)>0\bigr\},$$
where the minimum is taken over all quadruples $(a,x,d,u)\in\mathbb Z^4$ satisfying
$$p=a^2-x^2, \qquad q=d^2-u^2.$$
\end{definition}

Here the condition $>0$ excludes the trivial case $(a+d,x+u)=(0,0)$. If for some representation one has $a+d=0$ and $x+u=0$, then replacing
$d$ and $u$ by $-d$ and $-u$ yields another representation of $q$
for which $(a+d,x+u)\neq(0,0)$. Hence the set in Definition \ref{def:g}
is nonempty.


\section{Complete classification by congruence type}

In several rings (for example $\mathbb Z$, $\mathbb Z[i]$, and rings of integers of certain number fields), it has proved useful to classify elements representable as differences of two squares according to their congruence classes. Such classifications often play an important role in constructions of $D(n)$-quadruples, since in some rings the existence of $D(n)$-quadruples is closely related to representability of $n$ as a difference of two squares. Motivated by this, we investigate which congruence classes modulo $4$ can occur for matrices representable as differences of two squares in $UT_2(\mathbb Z)$. For this purpose, we study the corresponding problem in
$UT_2(\Z_4)$, the ring of upper triangular $2\times 2$ matrices with entries in $\mathbb Z_4$.

 \begin{proposition}\label{p1}
The matrices in $UT_2(\mathbb Z_4)$ that cannot be represented as differences of two squares in $UT_2(\mathbb Z_4)$ are precisely
$$S=\left\{
\begin{pmatrix}
a & b\\
0 & c
\end{pmatrix}\in UT_2(\mathbb Z_4) : a=2 \text{ or } c=2 \right\}\bigcup 
\left\{ \begin{pmatrix}
1 & 1,3\\
0 & 1
\end{pmatrix}, \begin{pmatrix}
3 & 1,3\\
0 & 3
\end{pmatrix}\right\}. $$
\end{proposition}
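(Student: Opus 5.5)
The argument works directly in $\mathbb Z_4$ using formula \eqref{e.1}, which holds verbatim over any commutative ring. A matrix $\begin{pmatrix}a&b\\0&c\end{pmatrix}$ is representable exactly when there are $\alpha,\xi,\delta,\mu\in\mathbb Z_4$ with
$$a=\alpha^2-\xi^2,\qquad c=\delta^2-\mu^2,$$
and $b$ lies in the ideal $(\alpha+\delta,\,\xi+\mu)$ of $\mathbb Z_4$, since then $b=\beta(\alpha+\delta)-\eta(\xi+\mu)$ can be solved for $\beta,\eta$. The ideals of $\mathbb Z_4$ are $\mathbb Z_4$, $\{0,2\}$ and $\{0\}$. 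So $b$ is automatically attainable if one of $\alpha+\delta$, $\xi+\mu$ is odd. If both are even but not both $0$, then exactly the even $b$ are attainable.

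The first step tabulates squares in $\mathbb Z_4$. Squares are $0$ for even entries and $1$ for odd entries, so $\alpha^2-\xi^2\in\{0,1,3\}$ and never equals $2$. This gives the first part of $S$, as in Corollary~\ref{cor:diag-obstruction}. It also shows that the representations are determined by parities:
\begin{itemize}
\item diagonal entry $0$ needs $\alpha\equiv\xi\pmod 2$;
\item diagonal entry $1$ needs $\alpha$ odd and $\xi$ even;
\item diagonal entry $3$ needs $\alpha$ even and $\xi$ odd.
\end{itemize}
Within each parity pattern the actual residues are free.

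The second step handles the cases with $a,c\in\{0,1,3\}$, one pair $(a,c)$ at a time. The aim is to make $\alpha+\delta$ or $\xi+\mu$ odd, so that every $b$ works.
\begin{itemize}
\item If $a=0$, take $\alpha=\xi$ of either parity. Choosing $\alpha$ of parity opposite to $\delta$ makes $\alpha+\delta$ odd, whatever $c$ is. The case $c=0$ is symmetric.
\item If $(a,c)=(1,3)$, then $\alpha$ is odd and $\delta$ even, so $\alpha+\delta$ is odd. The case $(3,1)$ is the same.
\item If $(a,c)=(1,1)$, both $\alpha,\delta$ are odd and both $\xi,\mu$ are even, so both sums are even. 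The case $(3,3)$ is similar.
\end{itemize}

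The third step, the main obstacle, is to check that the obstruction in the last case is exactly oddness of $b$. For $(a,c)=(1,1)$ or $(3,3)$, every representation has $\alpha+\delta$ and $\xi+\mu$ both even, so the ideal they generate lies in $\{0,2\}$ and odd $b$ is excluded. To show that even $b$ is attainable, I would exhibit one representation with a sum equal to $2$. For $(1,1)$ take $\alpha=\delta=1$, $\xi=\mu=0$, so $\alpha+\delta=2$ and every even $b$ is reached. For $(3,3)$ take $\alpha=\delta=0$, $\xi=\mu=1$, so $\xi+\mu=2$. Since $\{0,2\}$ is the full set of attainable values, $b=0$ and $b=2$ both work and $b=1,3$ do not. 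This gives exactly the two exceptional matrix families listed in $S$ and completes the case analysis.
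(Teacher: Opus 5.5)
Your proof is correct, and it takes a different route from the paper. The paper proves Proposition~\ref{p1} by direct computation: it evaluates formula \eqref{e.1} over all entries in $\mathbb Z_4$ and records the resulting 32 representable and 32 non-representable matrices in the Appendix.

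You instead carry the argument of Theorem~\ref{thm:main-criterion} over to $\mathbb Z_4$, where the gcd condition becomes membership of $b$ in the ideal $(\alpha+\delta,\xi+\mu)$. You then use the fact that squares in $\mathbb Z_4$ depend only on parity, so each diagonal value in $\{0,1,3\}$ fixes the parities of $(\alpha,\xi)$ and $(\delta,\mu)$. This makes the exceptional matrices transparent: only when $a=c\in\{1,3\}$ are both $\alpha+\delta$ and $\xi+\mu$ forced to be even. Your explicit witnesses then show that the ideal is exactly $\{0,2\}$ in that case. Your split over the nine pairs $(a,c)\in\{0,1,3\}^2$ is exhaustive, and the resulting count $28+4=32$ of non-representable matrices agrees with the Appendix.

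Your approach buys a proof that can be checked by hand, explains where the obstruction comes from, and anticipates the parity arguments in Propositions~\ref{prop:odd-diagonal} and~\ref{prop:mixed-cases}. The paper's computation buys uniformity with Lemma~\ref{lemma:mod16-classification}. Modulo $16$, squares $\{0,1,4,9\}$ no longer depend only on parity, and the ideals $2^k\mathbb Z_{16}$ form a longer chain, so a hand analysis like yours would need many more cases. The machine check and tables treat both moduli the same way.
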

\begin{proof}
We determine the set $S_1=\{A^2-B^2:\ A,B\in UT_2(\mathbb Z_4)\}$
using formula \eqref{e.1}. A direct verification shows that precisely the matrices listed in the statement do not belong to $S_1$. The complete lists of representable (32 cases) and non-representable (32 cases) matrices modulo $4$ are given in the Appendix.
\end{proof}

\begin{corollary}\label{cor:not_sq_mod_4}
    If $T\in UT_2(\mathbb Z)$ and $T \bmod 4 \in S$, then $T$ cannot be represented as a difference of two squares in $UT_2(\mathbb Z)$. 
\end{corollary}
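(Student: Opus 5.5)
The argument is a reduction modulo $4$. Reduction of entries modulo $4$ is a ring homomorphism $\pi\colon UT_2(\Z)\to UT_2(\Z_4)$, because matrix addition and multiplication of upper-triangular matrices are given by polynomial expressions in the entries with integer coefficients. Consequently $\pi$ sends squares to squares and differences to differences.

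We argue by contraposition. Suppose $T=A^2-B^2$ for some $A,B\in UT_2(\Z)$. Applying $\pi$ gives
$$T \bmod 4=\pi(A)^2-\pi(B)^2,$$
with $\pi(A),\pi(B)\in UT_2(\Z_4)$. So $T\bmod 4$ is a difference of two squares in $UT_2(\Z_4)$. By Proposition~\ref{p1}, the matrices in $UT_2(\Z_4)$ that are not such differences are exactly those in $S$. Hence $T\bmod 4\notin S$, which is the contrapositive of the claim.

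There is no real obstacle once Proposition~\ref{p1} is granted. The only point to state explicitly is that $\pi$ respects the formula \eqref{e.1}: the entries of $A^2-B^2$ are integer polynomials in the entries of $A$ and $B$, and reducing them modulo $4$ gives the same polynomials evaluated at the reduced entries. For the diagonal part of $S$ (where $a=2$ or $c=2$), the result also follows directly from Corollary~\ref{cor:diag-obstruction}. The two off-diagonal families, $\begin{pmatrix}1&1,3\\0&1\end{pmatrix}$ and $\begin{pmatrix}3&1,3\\0&3\end{pmatrix}$, genuinely need the mod $4$ computation in Proposition~\ref{p1}.
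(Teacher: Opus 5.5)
Your proof is correct and is the argument the paper intends. The paper states the corollary without a separate proof, immediately after Proposition~\ref{p1}: a representation $T=A^2-B^2$ over $\mathbb Z$ reduces entrywise to a representation over $\mathbb Z_4$, which is the same reduction step the paper later uses in the proof of Proposition~\ref{prop:odd-diagonal}. Your side remark is also accurate: the diagonal part of $S$ already follows from Corollary~\ref{cor:diag-obstruction}, while the two odd families with odd upper-right entry need the mod $4$ computation.
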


However, the converse of Corollary \ref{cor:not_sq_mod_4}  does not hold: the condition $T \bmod 4 \notin S$ does not guarantee that $T$ is representable as a difference of two squares in $UT_2(\mathbb Z)$. So modulo 4 classification gives necessary but not sufficient conditions.

\begin{exa}\label{exa:N}
   The matrix 
$$N=\begin{pmatrix}
4 & 2\\
0 & 4\end{pmatrix}$$
is not representable as a difference of two squares in $UT_2(\mathbb Z)$, although
$$N \bmod 4 =\begin{pmatrix}
0 & 2\\
0 & 0\end{pmatrix}$$
is representable as a difference of two squares in $UT_2(\mathbb Z_4)$.
Indeed, suppose
$$N=
\begin{pmatrix}
a_1 & b_1\\
0 & c_1
\end{pmatrix}^2-
\begin{pmatrix}
a_2 & b_2\\
0 & c_2
\end{pmatrix}^2=\begin{pmatrix}
a_1^2-a_2^2 &
b_1(a_1+c_1)- b_2(a_2+c_2)\\
0 &
c_1^2-c_2^2
\end{pmatrix}.$$
From $a_1^2-a_2^2=4$ and $c_1^2-c_2^2=4$, we obtain
$$(a_1,a_2)\in\{(\pm2,0)\},\qquad
(c_1,c_2)\in\{(\pm2,0)\}.$$
Hence $a_1+c_1\in\{-4,0,4\}$ and $a_2+c_2=0$, so
$$b_1(a_1+c_1)-b_2(a_2+c_2)=b_1(a_1+c_1)$$
is divisible by $4$, which contradicts the condition that the upper-right entry equals $2$.
\end{exa}

\begin{proposition}\label{prop:odd-diagonal}
Let
$$M=\begin{pmatrix}2k+1&r\\0&2n+1\end{pmatrix}\in UT_2(\Z).$$
Then $M$ is representable as a difference of two squares in $UT_2(\Z)$ if and only if
$$\gcd(k+n,2)\mid r.$$
(Or equivalently, 
 if $k$ and $n$ have opposite parity, then every $r\in\mathbb Z$ is admissible, and  if $k$ and $n$ have the same parity, then $r$ must be even.)
\end{proposition}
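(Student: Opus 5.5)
Everything follows from Theorem~\ref{thm:main-criterion}. It suffices to show that $g(2k+1,2n+1)=1$ when $k+n$ is odd, and that every representation gives an even $\gcd(a+d,x+u)$ when $k+n$ is even. The case $(a+d,x+u)=(0,0)$ will need a short separate check.

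\emph{Parity constraint.} Suppose $2k+1=a^2-x^2$ and $2n+1=d^2-u^2$. Since these are odd, $a$ and $x$ have opposite parity, and so do $d$ and $u$. Reduce modulo $4$: an odd square is $1$ and an even square is $0$. So $a^2-x^2\equiv 1\pmod 4$ exactly when $a$ is odd and $x$ is even, and $\equiv 3$ exactly when $a$ is even and $x$ is odd. Note that $2k+1\equiv 1\pmod 4$ iff $k$ is even. Hence $a\equiv k+1\pmod 2$ and $x\equiv k\pmod 2$, and likewise $d\equiv n+1$ and $u\equiv n\pmod 2$. It follows that
$$a+d\equiv x+u\equiv k+n\pmod 2.$$

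\emph{Necessity.} If $k+n$ is even, both $a+d$ and $x+u$ are even for every representation. In the nonzero case, $\gcd(a+d,x+u)$ is even and divides $r$, so $r$ is even. In the case $a+d=x+u=0$ we need $r=0$, which is even anyway.

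\emph{Sufficiency.} If $k+n$ is even and $r$ is even, use a representation with gcd $2$. If $k+n$ is odd, any representation has $a+d$ and $x+u$ both odd, and we need gcd $1$ to cover all $r$. This is the main step. I would use the standard representations
$$2k+1=(k+1)^2-k^2,\qquad 2n+1=(n+1)^2-n^2,$$
together with their sign-flipped variants (replace $d$ by $-d$, $u$ by $-u$, or use $\pm$ on each entry).

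\emph{Computing the gcds.} With $a=k+1$, $x=k$, $d=n+1$, $u=n$, we get $a+d=k+n+2$ and $x+u=k+n$. Then $\gcd(k+n+2,k+n)=\gcd(2,k+n)$, which is $1$ when $k+n$ is odd and $2$ when $k+n$ is even. Both cases are settled at once, since this pair is never $(0,0)$. Indeed, $k+n+2=0$ and $k+n=0$ cannot hold simultaneously.

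Theorem~\ref{thm:main-criterion} then yields the representation, with the linear Diophantine equation $b(a+d)-y(x+u)=r$ solvable. The only mild obstacle is the mod $4$ bookkeeping in the parity step; the construction itself is explicit.
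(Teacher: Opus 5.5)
Your proof is correct, and the sufficiency half is exactly the paper's. The paper also takes $A=\begin{pmatrix}k+1&b\\0&n+1\end{pmatrix}$ and $B=\begin{pmatrix}k&y\\0&n\end{pmatrix}$, and reduces to solvability of $b(k+n+2)-y(k+n)=r$, i.e.\ to $\gcd(k+n,2)\mid r$. Your remark about sign-flipped variants is therefore unnecessary.

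The necessity half is where you differ. The paper reduces $M$ modulo $4$ and appeals to Proposition~\ref{p1}, which is an exhaustive check of all $64$ matrices in $UT_2(\mathbb Z_4)$ recorded in the Appendix. When $k\equiv n\pmod 2$, the reduced matrix has equal odd diagonal entries, and the table says its corner entry must be even.

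You instead prove a structural fact about every integer representation. Squares mod $4$ force $a\equiv k+1$, $x\equiv k$, $d\equiv n+1$, $u\equiv n\pmod 2$. Hence $a+d\equiv x+u\equiv k+n\pmod 2$, and $r=b(a+d)-y(x+u)$ is then even whenever $k+n$ is.

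Your route is self-contained and explains the obstruction instead of reading it off a computed table. Combined with the explicit construction, it also computes $g(2k+1,2n+1)=\gcd(k+n,2)$ directly. That is the odd-diagonal case of Theorem~\ref{thm:g-pq}, obtained without passing through the final classification.

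The paper's route costs one line once Proposition~\ref{p1} is available. It also matches the uniform strategy used elsewhere in the paper: in the case of diagonal entries divisible by $4$, necessity is likewise read off a finite congruence table, there modulo $16$.
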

\begin{proof} $\boxed{\Leftarrow}$:
We use the identity
\[
\begin{pmatrix}
k + 1& b\\ 0& n + 1
\end{pmatrix}^2-
\begin{pmatrix}
k & y\\ 0& n
\end{pmatrix}^2
=
\begin{pmatrix}
2k+1 & b(k+n+2)-y(k+n)\\
0 & 2n+1
\end{pmatrix}.
\]
Therefore, it suffices to determine when the linear Diophantine equation
\begin{equation}\label{e.a}
b(k+n+2)-y(k+n)=r
\end{equation}
is solvable in integers $b$ and $y$. This happens if and only if
\[
\gcd(k+n+2,k+n)\mid r.
\]
Since
\[
\gcd(k+n+2,k+n)=\gcd(k+n,2),
\]
equation \eqref{e.a} is solvable if and only if
\[
\gcd(k+n,2)\mid r.
\]
Hence $M$ is representable as a difference of two squares in $UT_2(\mathbb Z)$.

$\boxed{\Rightarrow}$:
Suppose that
\[
M=\begin{pmatrix}2k+1&r\\0&2n+1\end{pmatrix}
\]
is representable as a difference of two squares in $UT_2(\mathbb Z)$. Reducing modulo $4$, we obtain a matrix representable as a difference of two squares in $UT_2(\mathbb Z_4)$.

If $k$ and $n$ have the same parity, then the diagonal entries of $M \bmod 4$ are equal and odd, i.e. both are $1$ or both are $3$. By Proposition~\ref{p1}, such a matrix is representable modulo $4$ only if its upper-right entry is even. Hence $r$ must be even and since $\gcd(k+n,2)=2$, the divisibility condition holds.

If $k$ and $n$ have opposite parity, then $\gcd(k+n,2)=1$, so the desired divisibility condition is automatically satisfied.

Therefore, in all cases, $\gcd(k+n,2)\mid r.$
\end{proof}

\begin{corollary}
    For every $k,n,r\in\mathbb Z$,  the matrices 
    $$ \begin{pmatrix}4k+1&r\\0&4n+3\end{pmatrix},
    \begin{pmatrix}4k+3&r\\0&4n+1\end{pmatrix},
    \begin{pmatrix}4k+1&2r\\0&4n+1\end{pmatrix},
    \begin{pmatrix}4k+3&2r\\0&4n+3\end{pmatrix}$$
    are representable as differences of two squares in $UT_2(\Z)$.
\end{corollary}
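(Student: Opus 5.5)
This corollary follows from Proposition~\ref{prop:odd-diagonal}. I will write each of the four matrices in the form $\begin{pmatrix}2k'+1&r'\\0&2n'+1\end{pmatrix}$ and check the condition $\gcd(k'+n',2)\mid r'$.

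\emph{Mixed residues.} For $\begin{pmatrix}4k+1&r\\0&4n+3\end{pmatrix}$, write $4k+1=2(2k)+1$ and $4n+3=2(2n+1)+1$. Then $k'=2k$ and $n'=2n+1$, so $k'+n'=2k+2n+1$ is odd. Hence $\gcd(k'+n',2)=1$, which divides every $r$. The matrix $\begin{pmatrix}4k+3&r\\0&4n+1\end{pmatrix}$ is symmetric to this one, with $k'=2k+1$ and $n'=2n$, and again $k'+n'$ is odd.

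\emph{Equal residues.} For $\begin{pmatrix}4k+1&2r\\0&4n+1\end{pmatrix}$ we have $k'=2k$ and $n'=2n$, so $k'+n'$ is even and the gcd equals $2$. This divides the upper-right entry $2r$. For $\begin{pmatrix}4k+3&2r\\0&4n+3\end{pmatrix}$ we have $k'=2k+1$ and $n'=2n+1$, so $k'+n'=2k+2n+2$ is even. The gcd is again $2$, which divides $2r$.

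In each case the $\Leftarrow$ direction of Proposition~\ref{prop:odd-diagonal} gives the representation, and the corollary follows. There is no real obstacle here; the only care needed is to track the parities of $k'$ and $n'$ correctly.
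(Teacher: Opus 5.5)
Your proposal is correct and follows the paper's approach: the paper states this corollary without proof as an immediate consequence of Proposition~\ref{prop:odd-diagonal}. Your substitutions $k'\in\{2k,2k+1\}$ and $n'\in\{2n,2n+1\}$, together with the parity check on $k'+n'$, are exactly the verification needed.
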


\begin{proposition}\label{prop:mixed-cases}
For every $k,n,r\in\mathbb Z$, both matrices
\[
\begin{pmatrix}2k+1&r\\0&4n\end{pmatrix}
\qquad\text{and}\qquad
\begin{pmatrix}4k&r\\0&2n+1\end{pmatrix}
\]
are representable as differences of two squares in $UT_2(\Z)$.
\end{proposition}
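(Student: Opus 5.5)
By Theorem~\ref{thm:main-criterion}, it suffices to find representations $2k+1=a^2-x^2$ and $4n=d^2-u^2$ with $\gcd(a+d,x+u)=1$. Any integer $r$ is then admissible, because the linear equation $b(a+d)-y(x+u)=r$ is solvable. The second matrix is handled the same way with the roles of the diagonal entries exchanged, since the gcd condition is symmetric in the two diagonal positions.

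For the odd entry I would use the standard representation $2k+1=(k+1)^2-k^2$, so $a=k+1$ and $x=k$. For $4n$ I would use $4n=(n+1)^2-(n-1)^2$, so $d=n+1$ and $u=n-1$. This gives
$$a+d=k+n+2,\qquad x+u=k+n-1,$$
whose difference is $3$. Hence $\gcd(a+d,x+u)\mid 3$, and the gcd is $1$ unless $k+n\equiv 1\pmod 3$. Neither sum can vanish simultaneously, since they differ by $3$, so the nontrivial case of the theorem applies.

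The only real obstacle is the residue class $k+n\equiv1\pmod 3$. Here I would exploit the sign freedom in the representations: replacing $(d,u)$ by $(-d,-u)$ or $(d,-u)$, and similarly for $(a,x)$, does not change $a^2-x^2$ or $d^2-u^2$. For instance, with $a=k+1$, $x=k$, $d=-(n+1)$, $u=n-1$ we get
$$a+d=k-n,\qquad x+u=k+n-1,$$
whose sum is $2k-1$. Alternatively, with $u=-(n-1)$ we get $a+d=k+n+2$ and $x+u=k-n+1$. Rather than chase these, I would choose a combination whose difference is a unit. Take $a=k+1$, $x=-k$, $d=n+1$, $u=n-1$. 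Then
$$a+d=k+n+2,\qquad x+u=n-1-k,$$
and their sum is $2n+1$, which is not immediately useful either. A cleaner choice is $a=-(k+1)$, $x=k$, $d=n+1$, $u=-(n-1)$. This gives
$$a+d=n-k,\qquad x+u=k-n+1,$$
whose sum is $1$. Hence $\gcd(a+d,x+u)=1$ for all $k,n$, with no case split needed, and this settles the first matrix completely.

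For the second matrix I would take $a=n'+1$, $x=-(n'-1)$ representing $4k$ in the $(1,1)$ position (written with the paper's parameter $k$ in place of $n'$), and $d=-(n+1)$, $u=n$ representing $2n+1$. Then
$$a+d=k-n,\qquad x+u=n-k+1,$$
whose sum is again $1$, so the gcd is $1$ and every $r$ is admissible. The only step needing care is checking these sign choices, which is a routine computation.
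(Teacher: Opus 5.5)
Your proof is correct and takes essentially the paper's route: you pick explicit representations of the two diagonal entries with $\gcd(a+d,x+u)=1$ and then apply Theorem~\ref{thm:main-criterion}. Your final sign choices give a pair $n-k,\ k-n+1$ summing to $1$, where the paper's give consecutive integers $k-n,\ k-n+1$; the exploratory difference-$3$ attempt can be dropped.
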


\begin{proof}
For the first matrix, we take
$$a=k+1,\qquad x=k, \qquad d=-(n+1), \qquad u=-(n-1).$$
Then
$$a^2-x^2=2k+1,
\qquad d^2-u^2=4n,$$ 
and
$$a+d=k-n,\qquad x+u=k-n+1. $$
Hence
$$\gcd(a+d,x+u)=\gcd(k-n,k-n+1)=1,$$
so Theorem~\ref{thm:main-criterion} applies for every $r$.

The second case is symmetric. We take
$$a=-(k+1), \qquad x=-(k-1), \qquad d=n+1, \qquad u=n.$$
Then
$$a^2-x^2=4k, \qquad d^2-u^2=2n+1,$$
and
$$a+d=n-k,\qquad x+u=n-k+1,$$
so again $\gcd(a+d,x+u)=1$.

Thus both matrices are representable for every $r$.
\end{proof}

Proposition~\ref{prop:mixed-cases} shows that in the mixed parity cases, the upper-right entry imposes no additional restriction: every such matrix is representable as a difference of two squares in $UT_2(\mathbb Z)$.

\begin{lemma}\label{lemma:mod16-classification}
Let
\[
M=\begin{pmatrix}4i&r\\0&4j\end{pmatrix}\in UT_2(\mathbb Z_{16}),
\qquad i,j\in\{0,1,2,3\}.
\]
Then $M$ is representable as a difference of two squares in $UT_2(\mathbb Z_{16})$ if and only if one of the following holds:
\begin{enumerate}
\item $(i,j)\in\{(1,1),(3,3)\}$ and $r\equiv0\pmod4$;
\item $(i,j)\in\{(1,3),(2,2),(3,1)\}$ and $r\equiv0\pmod2$;
\item $(i,j)\in\{(0,0),(0,1),(0,2),(0,3),(1,0),(1,2),(2,0),(2,1),(2,3),(3,0),(3,2)\}$, with arbitrary $r\in\mathbb Z_{16}$.
\end{enumerate}
\end{lemma}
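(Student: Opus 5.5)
The plan is to reduce the question modulo $16$ to a statement about the smallest achievable value of $\gcd(a+d,\,x+u,\,16)$, in the spirit of Theorem~\ref{thm:main-criterion}. By \eqref{e.1}, $M$ is representable in $UT_2(\Z_{16})$ if and only if there exist residues $a,x,d,u$ satisfying the diagonal conditions below, together with $b,y$ satisfying $b(a+d)-y(x+u)\equiv r \pmod{16}$:
\[
a^2-x^2\equiv 4i,\qquad d^2-u^2\equiv 4j \pmod{16}.
\]
Over $\Z_{16}$ the linear equation in $b,y$ is solvable exactly when $\gcd(a+d,x+u,16)\mid r$. So for each pair $(i,j)$ I will determine the minimum $h(i,j)$ of $\gcd(a+d,x+u,16)$ over all admissible diagonal representations. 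The lemma then says precisely that $h=4$ for $(1,1),(3,3)$, that $h=2$ for $(1,3),(2,2),(3,1)$, and that $h=1$ for the remaining eleven pairs.

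\textbf{Step 1: parity structure of representations of $4i$ modulo $16$.} From $a^2-x^2\equiv 0\pmod 4$ we get that $a$ and $x$ have the same parity.
\begin{itemize}
\item If $a,x$ are both odd, then $a^2,x^2\in\{1,9\}$ modulo $16$. Hence $a^2-x^2\in\{0,8\}$, so $i\in\{0,2\}$.
\item If $a=2a'$ and $x=2x'$, then $a^2-x^2=4(a'^2-x'^2)$ with $a'^2-x'^2\in\{0,1,3\}\pmod 4$. This gives $i\in\{0,1,3\}$. Moreover, $i=1$ forces $a'$ odd and $x'$ even, while $i=3$ forces $a'$ even and $x'$ odd.
\end{itemize}
Summarizing:
\begin{itemize}
\item $i=2$ forces $a,x$ both odd;
\item $i\in\{1,3\}$ forces $a,x$ both even, with the prescribed parities of $a',x'$;
\item $i=0$ admits both an odd--odd representation (e.g. $a=x=1$) and an even--even one (e.g. $a=x=0$).
\end{itemize}

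\textbf{Step 2: lower bounds (necessity).}
\begin{itemize}
\item For $(i,j)=(1,1)$: all four of $a,x,d,u$ are even, and $a',d'$ are odd while $x',u'$ are even. Hence $a'+d'$ and $x'+u'$ are both even, so $4\mid a+d$ and $4\mid x+u$. Thus $4\mid r$ is necessary. The case $(3,3)$ is the same with the roles of $a',x'$ (and of $d',u'$) swapped.
\item For $(1,3),(3,1)$: all entries are even, so $2\mid r$ is necessary.
\item For $(2,2)$: all four entries are odd, so $a+d$ and $x+u$ are even, and again $2\mid r$ is necessary.
\end{itemize}

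\textbf{Step 3: explicit constructions attaining the bounds (sufficiency).}
\begin{itemize}
\item For $(1,1)$, take $a=d=2$ and $x=u=0$. Then $a+d=4$ and $x+u=0$, giving $h=4$. For $(3,3)$, take $a=d=0$ and $x=u=2$.
\item For $(1,3)$, take $a=2,\ x=0,\ d=0,\ u=2$, so that $a+d=2$ and $x+u=2$, giving $h=2$. The case $(3,1)$ is symmetric.
\item For $(2,2)$, take $a=d=3$ and $x=u=1$ (indeed $9-1=8$). Then $a+d=6$ and $x+u=2$, so the gcd with $16$ is $2$.
\item For the eleven pairs in item (3), at least one diagonal entry has $i$ or $j\in\{0,2\}$ and so admits an odd--odd representation. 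I pair that odd--odd representation with an even--even representation of the other entry, which exists unless the other index is $2$. Then $a+d$ is odd and $h=1$. The pairs $(0,2)$ and $(2,0)$ use the even representation $0=0^2-0^2$ for the $0$-entry.
\end{itemize}

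The main obstacle is purely bookkeeping. One must check that item (3) is exactly the set of pairs where an odd/even mix is available, namely those avoiding both indices lying in $\{1,3\}$ and avoiding $(2,2)$, and then write down a concrete quadruple for each of these pairs. A finite check with small representatives such as $0,1,2,3$ should settle every case.
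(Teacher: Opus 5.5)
Your proposal is correct, and it takes a genuinely different route from the paper. The paper proves the lemma only ``by direct computation'', pointing to the exhaustive mod-$16$ table in the Appendix.

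You instead argue as follows:
\begin{enumerate}
\item You transfer the gcd criterion of Theorem~\ref{thm:main-criterion} to $\mathbb Z_{16}$: the values of $b(a+d)-y(x+u)$ form exactly the ideal generated by $\gcd(a+d,x+u,16)$.
\item You classify representations of $4i$ modulo $16$ by parity. The residue $8$ has only odd--odd representations, $4$ and $12$ have only even--even ones (with forced parities of $a',x'$), and $0$ has both.
\item This yields the lower bounds $4$, $2$, $2$ on that gcd for $(1,1),(3,3)$, for $(1,3),(3,1)$, and for $(2,2)$ respectively.
\item Your explicit quadruples attain these bounds. I checked each one; for example $a=d=3$, $x=u=1$ gives $8$ on both diagonals and $\gcd(6,2,16)=2$.
\item In the eleven remaining cases you pair an odd--odd representation with an even--even one, so $a+d$ is odd and the gcd is $1$.
\end{enumerate}

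The paper's computational verification is mechanical and directly produces the full table, including the count $48/256$, but it does not explain where the obstructions come from. Your argument can be checked by hand and shows exactly why they arise: forced parities make $a+d$ and $x+u$ both even, or both divisible by $4$. The same parity analysis also works verbatim for $a^2-x^2=4k$ over $\mathbb Z$, so it would give the necessity part of Proposition~\ref{prop:4k-4n-complete} directly.
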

\begin{proof} The statement follows by direct computation. All representable matrices modulo $16$ are listed in the Appendix.
\end{proof}
Exactly 48 out of 256 matrices in $UT_2(\mathbb Z_{16})$ with diagonal entries divisible by 4 fail to be representable as differences of two squares, i.e. the proportion of non-representable matrices is 3/16 (see the Appendix).

\begin{corollary}\label{cor:nonrep-mod16}
Let
\[
M=\begin{pmatrix}4i&r\\0&4j\end{pmatrix}\in UT_2(\mathbb Z_{16}),
\qquad i,j\in\{0,1,2,3\}.
\]
Then $M$ cannot be represented as a difference of two squares in $UT_2(\mathbb Z_{16})$ if and only if one of the following holds:
\begin{enumerate}
\item$(i,j)\in\{(1,1),(3,3)\}$ and $r\not\equiv 0\pmod 4$;
\item$(i,j)\in\{(1,3),(2,2),(3,1)\}$ and $r\not\equiv 0\pmod 2$.
\end{enumerate}
\end{corollary}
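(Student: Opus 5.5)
The corollary is the complement of Lemma~\ref{lemma:mod16-classification} within the set of $256$ matrices $\begin{pmatrix}4i&r\\0&4j\end{pmatrix}$ with $i,j\in\{0,1,2,3\}$ and $r\in\mathbb Z_{16}$. So I will derive it from the lemma by negation rather than redo any computation.

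First I check that the three families of pairs $(i,j)$ in the lemma are disjoint and together exhaust $\{0,1,2,3\}^2$. The first family has $2$ pairs, the second has $3$, and the third has $11$. That makes $16=4^2$ pairs, and inspection shows no pair appears twice. Consequently each pair $(i,j)$ lies in exactly one family, and for that pair the lemma gives a single condition on $r$ that is equivalent to representability.

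Now I negate pair by pair.
\begin{enumerate}
\item For $(i,j)\in\{(1,1),(3,3)\}$, $M$ is non-representable iff $r\not\equiv0\pmod4$.
\item For $(i,j)\in\{(1,3),(2,2),(3,1)\}$, $M$ is non-representable iff $r\not\equiv0\pmod2$.
\item For the remaining eleven pairs, every $r$ is admissible, so no matrix with such a diagonal is non-representable.
\end{enumerate}
Taking the union of these cases gives exactly the two conditions in the statement, which proves both directions at once.

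The congruences modulo $4$ and $2$ are well defined on $\mathbb Z_{16}$ because $4$ and $2$ divide $16$. The only step needing attention is the exhaustiveness and disjointness of the $(i,j)$ families. As a cross-check, the count should agree with the remark after the lemma. Two pairs with $12$ bad values of $r$ each give $24$, and three pairs with $8$ bad values each give $24$, so there are $48$ non-representable matrices in total.
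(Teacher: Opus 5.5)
Your proof is correct and follows the paper's approach: the paper states this corollary without separate proof, as the complement of Lemma~\ref{lemma:mod16-classification}. Your check that the three families of pairs $(i,j)$ partition $\{0,1,2,3\}^2$ ($2+3+11=16$), and your count of $24+24=48$ non-representable matrices, agree with the paper's remark and its Appendix table.
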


\begin{proposition}\label{prop:4k-4n-complete}
Let
$$M=\begin{pmatrix}4k&r\\0&4n\end{pmatrix}\in UT_2(\Z).$$
Then $M$ is representable as a difference of two squares in $UT_2(\Z)$ if and only if one of the following conditions holds:
\begin{enumerate}
\item $k+n\equiv1\pmod 2$, or $(k,n)\bmod 4\in\{(0,0),(0,2),(2,0)\}$, with arbitrary $r\in\mathbb Z$;
\item $k+n\equiv0\pmod 4$, $(k,n)\bmod 4\neq(0,0)$, and $r\equiv0\pmod 2$;
\item $k+n\equiv2\pmod 4$,$(k,n)\bmod 4\notin\{(0,2),(2,0)\}$, and $r\equiv0\pmod 4$.
\end{enumerate}
\end{proposition}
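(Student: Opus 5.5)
The plan is to prove necessity by reducing modulo $16$ and invoking Lemma~\ref{lemma:mod16-classification}, and to prove sufficiency by explicit choices of $(a,x,d,u)$ fed into Theorem~\ref{thm:main-criterion}.

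\emph{Necessity.} If $M=A^2-B^2$ in $UT_2(\Z)$, then reducing modulo $16$ gives a representation in $UT_2(\mathbb Z_{16})$ of $\begin{pmatrix}4i&r\\0&4j\end{pmatrix}$ with $i\equiv k$, $j\equiv n\pmod 4$. I will translate the three cases of Lemma~\ref{lemma:mod16-classification} into the language of the proposition by checking residues.
\begin{enumerate}
\item The pairs $(i,j)$ with $i+j$ odd, together with $(0,0),(0,2),(2,0)$, are exactly the unrestricted pairs of the lemma.
\item The pairs with $i+j\equiv 0\pmod 4$, other than $(0,0)$, are $(1,3),(2,2),(3,1)$. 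For these the lemma requires $r$ even.
\item The pairs with $i+j\equiv 2\pmod 4$, other than $(0,2),(2,0)$, are $(1,1),(3,3)$. For these it requires $4\mid r$.
\end{enumerate}
The lemma's condition on $r$ concerns $r \bmod 16$, so it lifts to the stated condition on $r\in\Z$. This gives the ``only if'' direction.

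\emph{Sufficiency, generic construction.} Take $a=k+1$, $x=k-1$, $d=n+1$, $u=n-1$. Then $a^2-x^2=4k$ and $d^2-u^2=4n$, and
$$\gcd(a+d,x+u)=\gcd(k+n+2,k+n-2)=\gcd(k+n-2,4).$$
This gcd equals $1$ when $k+n$ is odd, $2$ when $k+n\equiv 0\pmod 4$, and $4$ when $k+n\equiv 2\pmod 4$. By Theorem~\ref{thm:main-criterion}, this already settles the case $k+n$ odd and cases (2) and (3). In fact it settles them without the exclusions, which are only needed for necessity.

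\emph{Sufficiency, exceptional classes.} It remains to show that every $r$ is admissible when $(k,n)\bmod 4\in\{(0,0),(0,2),(2,0)\}$. For this I need $\gcd(a+d,x+u)=1$, so one of $a+d$, $x+u$ must be odd. This forces an odd representation $(a,x)=(k+1,k-1)$ of one diagonal entry to be paired with an even representation of the other.
\begin{itemize}
\item For $n\equiv 0\pmod 4$ with $k$ even, use $a=k+1$, $x=k-1$, $d=n/2+2$, $u=n/2-2$. Then $d^2-u^2=4n$. The number $a+d$ is odd and $(a+d)-(x+u)=4$, so the gcd is $1$. This covers $(0,0)$ and $(2,0)$.
\item For $(0,2)$, use the even representation $a=k/2+2$, $x=k/2-2$ (valid since $4\mid k$) together with $d=-(n+1)$, $u=-(n-1)$. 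Then $a+d$ is odd and $(a+d)-(x+u)=4-2=2$, so again the gcd is $1$.
\end{itemize}

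The main obstacle is precisely this last step. The sign choices matter: a naive choice with $d=n+1$ gives a difference of $6$, which would allow a factor $3$ in the gcd. The remaining care is making sure the even representation exists, which needs $4\mid k$ (respectively $4\mid n$). Everything else is bookkeeping of residues.
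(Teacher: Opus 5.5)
Your overall plan matches the paper's: necessity by reduction to the mod-$16$ lemma (your residue bookkeeping is correct), and sufficiency via the generic choice $a=k+1$, $x=k-1$, $d=n+1$, $u=n-1$ plus special choices that pair an odd representation with an even one. Your $(0,2)$ bullet is exactly the paper's construction. In fact your $u=-(n-1)$ is the sign the paper needs: its printed $B$ has $+(4N+1)$ there, which can produce a common factor $3$.

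The gap is in your first bullet, which is meant to cover $(0,0)$ and $(2,0)$. With $a=k+1$, $x=k-1$, $d=n/2+2$, $u=n/2-2$ one has $(a+d)-(x+u)=(a-x)+(d-u)=2+4=6$, not $4$. Since $a+d=k+n/2+3$ is odd, $\gcd(a+d,x+u)=\gcd(k+n/2,3)$, and this equals $3$ whenever $3\mid k+n/2$. For example, $k=n=0$ gives $(a,x,d,u)=(1,-1,2,-2)$, so $a+d=3$ and $x+u=-3$. Your choice therefore does not produce $\begin{pmatrix}0&1\\0&0\end{pmatrix}$. Similarly, $k=n=4$ and $(k,n)=(2,8)$ both give gcd $3$. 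This is exactly the pitfall you flag in your last paragraph, but it occurs here, not in the second bullet.

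The repair is the same sign flip: take $d=-(n/2+2)$ and $u=-(n/2-2)$. Then still $d^2-u^2=4n$, $a+d=k-n/2-1$ is odd, and $(a+d)-(x+u)=2-4=-2$, so the gcd is $1$. With this change your argument is complete.

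Note also that for $(2,0)$ your assignment of roles (odd representation for $4k$, even one for $4n$) is forced. When $k\equiv 2\pmod 4$, $4k$ admits no representation $a^2-x^2$ with $a,x$ both even. This is the case the paper only calls ``analogous''.
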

\begin{proof}  $\boxed{\Leftarrow}$:
 First, the choice
$$a=k+1,\qquad x=k-1,\qquad d=n+1,\qquad u=n-1,$$
shows representability whenever $r$ satisfies the divisibility condition determined by $\gcd(k+n+2,4)$.
Indeed,
$$a^2-x^2=4k,\qquad d^2-u^2=4n,$$
and
$$a+d=k+n+2,\qquad x+u=k+n-2.$$
Hence
$$\gcd(a+d,x+u)=\gcd(k+n+2,k+n-2)=\gcd(k+n+2,4).$$
Therefore:
\begin{enumerate}
\item[(a)] if $k+n\equiv 1 \pmod 2$, then $\gcd(a+d,x+u)=1$;
\item[(b)] if $k+n\equiv 0 \pmod 4$, then $\gcd(a+d,x+u)=2$;
\item[(c)] if $k+n\equiv 2 \pmod 4$, then $\gcd(a+d,x+u)=4$.
\end{enumerate}
In each case, the assumed divisibility condition on $r$ implies that
$\gcd(a+d,x+u)\mid r.$
Hence Theorem~\ref{thm:main-criterion} shows that $M$ is representable as a difference of two squares in $UT_2(\Z)$ in the following cases:
\begin{itemize}
   \item $k+n\equiv1\pmod 2$,
\item $k+n\equiv0\pmod 4$ and $r\equiv0\pmod 2$;
\item $k+n\equiv2\pmod 4$ and $r\equiv0\pmod 4$.
\end{itemize}

For the remaining cases, we also give concrete representations.  For the case $k\equiv n\equiv 0 \pmod 4$, we put $k=4K$ and $n=4N$. For
$$A=\begin{pmatrix}2(K+1)&b\\0&-(4N+1)\end{pmatrix},\qquad
B=\begin{pmatrix}2(K-1)&y\\0&-(4N-1)\end{pmatrix},$$
we obtain
$$A^2-B^2=\begin{pmatrix}
16K & b(2K-4N+1)-y(2K-4N-1)\\0 & 16N
\end{pmatrix}.$$
Since
$$\gcd(2K-4N+1,\;2K-4N-1)=1,$$
the upper-right entry can be chosen arbitrarily. Hence every matrix
$$\begin{pmatrix}16K&r\\0&16N\end{pmatrix}$$
is representable as a difference of two squares in $UT_2(\Z)$.

Assume $k\equiv 0 \pmod 4$ and $n\equiv 2 \pmod 4$, say $k=4K$ and $n=4N+2$. Consider
$$A=\begin{pmatrix}2(K+1)&b\\0&-(4N+3)\end{pmatrix},\qquad
B=\begin{pmatrix}2(K-1)&y\\0&4N+1\end{pmatrix}.$$
Then
$$A^2-B^2=\begin{pmatrix}
16K &
b(2K-4N-1)+y(2K-4N-3)\\0 &8(2N+1)\end{pmatrix}.$$
Since
$$\gcd(2K-4N-1,\;2K-4N-3)=1,$$
the upper-right entry can be chosen arbitrarily. Hence every matrix of the form
$$\begin{pmatrix}
16K&r\\0&8(2N+1)
\end{pmatrix}$$
is representable as a difference of two squares in $UT_2(\mathbb Z)$. The case $(k,n)\equiv(2,0)\pmod 4$ is analogous.

$\boxed{\Rightarrow}$:
The claim follows from Corollary~\ref{cor:nonrep-mod16}, by contraposition of the fact that representability over $\mathbb Z$ implies representability modulo $16$.
\end{proof}


\begin{remark}
Example \ref{exa:N}  can now be explained in terms of Proposition~\ref{prop:4k-4n-complete}. Indeed,  case (3) of Proposition~\ref{prop:4k-4n-complete} requires $r\equiv0\pmod4$.
\end{remark}

\section{Final classification and comments}

We may now summarize the representation problem completely.

\begin{theorem}\label{thm:final-classification}
Let
$$M=\begin{pmatrix}p&r\\0&q\end{pmatrix}\in UT_2(\Z).$$

Then $M$ is representable as a difference of two squares in $UT_2(\Z)$ if and only if one of the following holds:
\begin{enumerate}
\item $p$ and $q$ are odd, and
\begin{itemize}
\item  $p\equiv q\pmod4$ and $r\equiv0\pmod2$;
\item $p\not\equiv q\pmod4$ and $r$ is arbitrary;
\end{itemize}
\item one of $p,q$ is odd and the other is divisible by $4$, and $r$ is arbitrary;
\item $p\equiv q\equiv0\pmod4$, and
\begin{itemize}
\item $(p,q)\mod16\in\{(4,4),(12,12)\}$ and $r\equiv0\pmod4$;
\item $(p,q)\mod16\in\{(4,12),(8,8),(12,4)\}$ and $r\equiv0\pmod2$;
\item otherwise, $r$ is arbitrary.
\end{itemize}
\end{enumerate}
\end{theorem}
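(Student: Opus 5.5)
The plan is to assemble Theorem~\ref{thm:final-classification} from the case analysis already carried out, splitting according to the residues of $p$ and $q$ modulo $4$. First, if $p\equiv 2\pmod 4$ or $q\equiv 2\pmod 4$, Corollary~\ref{cor:diag-obstruction} shows that $M$ is not representable. None of the three listed conditions covers such $p,q$, since each condition requires each diagonal entry to be odd or divisible by $4$. So in this case both sides of the equivalence are false. It then remains to treat, one at a time, the three cases in which each of $p,q$ is odd or divisible by $4$.

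\emph{Both $p,q$ odd.} Write $p=2k+1$ and $q=2n+1$, and apply Proposition~\ref{prop:odd-diagonal}. We have $p\equiv q\pmod 4$ exactly when $k\equiv n\pmod 2$, and in that case $\gcd(k+n,2)=2$, so the condition is that $r$ is even. If $p\not\equiv q\pmod 4$, then $\gcd(k+n,2)=1$ and every $r$ is admissible. This is precisely item (1).

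\emph{One of $p,q$ odd, the other divisible by $4$.} Proposition~\ref{prop:mixed-cases} gives representability for every $r$, which is item (2).

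\emph{$p=4k$, $q=4n$.} This case reduces to Proposition~\ref{prop:4k-4n-complete}. The only real work is translating its conditions on $(k,n)\bmod 4$ into conditions on $(p,q)\bmod 16$, using $p\bmod 16=4(k\bmod 4)$. The translation goes as follows.
\begin{itemize}
\item Case (3) of that proposition requires $k+n\equiv 2\pmod 4$ and $(k,n)\not\equiv(0,2),(2,0)$. This leaves $(k,n)\equiv(1,1),(3,3)$, i.e. $(p,q)\bmod 16\in\{(4,4),(12,12)\}$, with the condition $r\equiv 0\pmod 4$.
\item Case (2) requires $k+n\equiv 0\pmod 4$ and $(k,n)\not\equiv(0,0)$. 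This leaves $(1,3),(2,2),(3,1)$, i.e. $(p,q)\bmod 16\in\{(4,12),(8,8),(12,4)\}$, with the condition $r$ even.
\item All remaining residue pairs fall under case (1): either $k+n$ is odd, or $(k,n)\equiv(0,0),(0,2),(2,0)$. For these $r$ is arbitrary, which is the ``otherwise'' clause.
\end{itemize}
This matches Lemma~\ref{lemma:mod16-classification} entry by entry, which serves as a consistency check.

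There is no real obstacle, since the substantive constructions and the modular obstructions were established earlier. The step needing most care is the bookkeeping in the $4\mid p,q$ case. One must confirm that the sixteen residue pairs $(k,n)\bmod 4$ are partitioned exactly into the three lists, so that every pair is covered and the ``otherwise'' clause is exactly case (1). One must also check that both directions of each proposition are invoked, so that the theorem is a genuine equivalence.
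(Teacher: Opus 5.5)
Your proposal is correct and follows the paper's own proof. Like the paper, you assemble the theorem from Propositions~\ref{prop:odd-diagonal}, \ref{prop:mixed-cases} and \ref{prop:4k-4n-complete} together with the mod-$4$ diagonal obstruction; you cite Corollary~\ref{cor:diag-obstruction} where the paper cites Corollary~\ref{cor:not_sq_mod_4}, which amounts to the same thing. You also make explicit the translation of the $(k,n)\bmod 4$ conditions into $(p,q)\bmod 16$ conditions, including the check that the sixteen residue pairs split as $2+3+11$, which the paper leaves implicit.
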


\begin{proof}
The statement follows by combining the classification in the odd-diagonal case (Proposition~\ref{prop:odd-diagonal}), the mixed cases (Proposition~\ref{prop:mixed-cases}), and the case of diagonal entries divisible by $4$ (Proposition~\ref{prop:4k-4n-complete}), together  with Corollary \ref{cor:not_sq_mod_4}.
\end{proof}

The previous result shows that approximately one half of upper-triangular integer matrices are representable as differences of two squares.

\begin{corollary}
    The set of representable matrices has asymptotic density $125/256$ in $UT_2(\Z)$.
\end{corollary}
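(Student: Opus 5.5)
The plan is to note that, by Theorem~\ref{thm:final-classification}, whether $M=\begin{pmatrix}p&r\\0&q\end{pmatrix}$ is representable depends only on $(p \bmod 16,\ q\bmod 16,\ r\bmod 4)$. In particular it depends only on $(p,q,r)\bmod 16$. So the set $\mathcal R$ of representable matrices, viewed as a subset of $\Z^3$, is a union of residue classes modulo $16$. We interpret asymptotic density in the natural way: the limit of $\#\{(p,q,r)\in\mathcal R: |p|,|q|,|r|\le X\}/(2X+1)^3$ as $X\to\infty$. For a set periodic modulo $16$ this limit exists and equals the proportion of residue triples in $(\Z_{16})^3$ that lie in $\mathcal R$. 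The error comes only from incomplete periods at the boundary of the box, which contribute $O(X^2)$.

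The density is therefore $\frac1{256}\sum_{(p,q)\bmod 16} w(p,q)$, where $w(p,q)\in\{0,\tfrac14,\tfrac12,1\}$ is the proportion of residues $r\bmod 4$ allowed by Theorem~\ref{thm:final-classification}. By Corollary~\ref{cor:diag-obstruction}, $w=0$ whenever $p$ or $q$ is $\equiv 2\pmod 4$. I would then sum over the three cases of the theorem, using that among the residues mod $16$ there are $8$ odd ones (four $\equiv1$ and four $\equiv3\pmod4$) and $4$ divisible by $4$.

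\emph{Both $p,q$ odd.} There are $64$ pairs. Of these, $32$ have $p\equiv q\pmod 4$ and weight $\tfrac12$, and the other $32$ have weight $1$, giving a contribution of $48$.

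\emph{Mixed parity.} One entry is odd and the other is divisible by $4$. There are $2\cdot 8\cdot 4=64$ such pairs, all of weight $1$, contributing $64$.

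\emph{Both divisible by $4$.} There are $16$ pairs. The pairs $(4,4)$ and $(12,12)$ have weight $\tfrac14$. The pairs $(4,12),(8,8),(12,4)$ have weight $\tfrac12$. The remaining $11$ pairs have weight $1$. The contribution is $\tfrac12+\tfrac32+11=13$.

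Altogether the sum is $48+64+13=125$, so the density is $125/256$. This agrees with the count of $48$ non-representable matrices out of $256$ in the mod-$16$ count of Lemma~\ref{lemma:mod16-classification}.

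The only step that needs care, rather than being an obstacle, is stating precisely what ``density in $UT_2(\Z)$'' means. The periodicity argument then needs checking: a box of side $2X+1$ contains $\lfloor (2X+1)/16\rfloor^3$ complete periods, and the leftover boundary has size $O(X^2)$. With that in place the limit equals the residue-class proportion, and the rest is the bookkeeping above.
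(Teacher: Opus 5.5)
Your proposal is correct and is essentially the paper's proof. Both arguments read off from Theorem~\ref{thm:final-classification} the proportion of admissible $r$ for each pair $(p,q)\bmod 16$ and sum these proportions; your grouping $48+64+13$ is just a regrouping of the paper's $\tfrac12\cdot 32+3\cdot 32+\tfrac14\cdot 2+\tfrac12\cdot 3+11=125$. The periodicity argument you add to make ``asymptotic density'' precise (boxes $|p|,|q|,|r|\le X$, with boundary error $O(X^2)$) is a detail the paper leaves implicit.
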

\begin{proof}
    According to Theorem \ref{thm:final-classification}, we can precisely list the following: 
    \begin{center}
    \begin{tabular}{c|c|c}
        & number of pairs    &  \\ 
         conditions on $(p,q)$  & $(p,q)$ mod 16  &  conditions on $r$   \\\hline \hline
 $p\equiv q\equiv 1\pmod2$,    $p\equiv q\pmod4$     & 32  & $r\equiv 0\pmod 2$ \\ \hline 
      $p\equiv q\equiv 1\pmod2$,    $p\not\equiv q\pmod4$     & 32  & no cond. \\ \hline
        $p\equiv 1\pmod2$,    $q \equiv 0\pmod4$     & 32  & no cond. \\ \hline
         $q\equiv 1\pmod2$,    $p \equiv 0\pmod4$     & 32  & no cond. \\ \hline
          $(p,q)\equiv (4,4),(12,12)\pmod{16}$     & 2  & $r\equiv 0\pmod 4$ \\ \hline
           $(p,q)\equiv (4,12),(8,8),(12,4)\pmod{16}$     & 3  & $r\equiv 0\pmod 2$ \\ \hline
     $p\equiv q\equiv 0\pmod4$, & & \\    
     different from the previous two       & $16-2-3=11$  & no cond. \\  
    \end{tabular}     
    \end{center}
Therefore, the density is $(\frac{1}{2}\cdot 32+3\cdot 32+\frac{1}{4}\cdot 2+\frac{1}{2}\cdot 3+11)/256=125/256$.
\end{proof}

\begin{theorem}\label{thm:g-pq}
The matrix $T=\begin{pmatrix}p&r\\0&q\end{pmatrix}\in UT_2(\mathbb Z)$ 
is representable as a difference of two squares in $UT_2(\mathbb Z)$ if and only if
\begin{enumerate}
\item $p$ and $q$ are representable as differences of two squares in $\mathbb Z$, and
\item $g(p,q)\mid r$, where $g(p,q)$ is given in Definition \ref{def:g}.
\end{enumerate}
\end{theorem}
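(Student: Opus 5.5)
The plan is to derive Theorem~\ref{thm:g-pq} from the complete classification in Theorem~\ref{thm:final-classification}, rather than directly from Theorem~\ref{thm:main-criterion}. A direct argument has a gap. Theorem~\ref{thm:main-criterion} only says that \emph{some} representation of $p$ and $q$ has $\gcd(a+d,x+u)\mid r$. It is not clear a priori that the \emph{minimal} value $g(p,q)$ divides $r$, because the minimum of a set of integers need not divide each of them. So I will identify $g(p,q)$ explicitly.

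First, for $p,q\not\equiv 2\pmod 4$, define $m(p,q)\in\{1,2,4\}$ by reading off Theorem~\ref{thm:final-classification}:
\begin{itemize}
\item $m(p,q)=2$ if $p,q$ are odd with $p\equiv q\pmod 4$;
\item $m(p,q)=4$ if $(p,q)\bmod 16\in\{(4,4),(12,12)\}$;
\item $m(p,q)=2$ if $(p,q)\bmod 16\in\{(4,12),(8,8),(12,4)\}$;
\item $m(p,q)=1$ otherwise.
\end{itemize}
With this notation, Theorem~\ref{thm:final-classification} reads: $T$ is representable if and only if $p,q$ are differences of two squares and $m(p,q)\mid r$. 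The key step is then to prove $g(p,q)=m(p,q)$.

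For $g\le m$, I reuse the explicit constructions. In each congruence class, the representations in Propositions~\ref{prop:odd-diagonal}, \ref{prop:mixed-cases} and \ref{prop:4k-4n-complete} produce $\gcd(a+d,x+u)$ equal to exactly the listed modulus. Concretely:
\begin{itemize}
\item Proposition~\ref{prop:odd-diagonal} gives $\gcd(k+n,2)$.
\item Proposition~\ref{prop:mixed-cases} gives $1$.
\item Proposition~\ref{prop:4k-4n-complete} gives $\gcd(k+n+2,4)$, and $1$ in the special classes $(k,n)\bmod 4\in\{(0,0),(0,2),(2,0)\}$.
\end{itemize}
These values are $\ge 1$, so they are admissible in Definition~\ref{def:g}, and hence $g(p,q)\le m(p,q)$.

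For the reverse inequality, suppose $g:=g(p,q)<m(p,q)$. Then $m\in\{2,4\}$ and $g\in\{1,2,3\}$, so $m\nmid g$. Take a quadruple $(a,x,d,u)$ realizing $g$. Theorem~\ref{thm:main-criterion} then gives a representation of $\begin{pmatrix}p&g\\0&q\end{pmatrix}$. But Theorem~\ref{thm:final-classification} forbids this, since $m\nmid g$. Hence $g(p,q)=m(p,q)$. This step is the only real content, and the minimality obstacle disappears once $g$ is pinned down to a modulus that characterizes representability.

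Finally, both directions follow. If $T$ is representable, then (1) holds by \eqref{e.1}, and Theorem~\ref{thm:final-classification} gives $m(p,q)=g(p,q)\mid r$. Conversely, suppose (1) holds and $g(p,q)\mid r$. Then $p,q\not\equiv 2\pmod 4$, so we are in one of the cases of Theorem~\ref{thm:final-classification}, and $m\mid r$ yields representability. Alternatively, one can apply Theorem~\ref{thm:main-criterion} directly to the quadruple realizing $g(p,q)$; this also covers $r=0$ without using the degenerate branch.
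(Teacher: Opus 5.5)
Your proposal is correct and is essentially the paper's proof: you read off the modulus $m(p,q)\in\{1,2,4\}$ from Theorem~\ref{thm:final-classification}, obtain $g(p,q)\le m(p,q)$ from the explicit constructions, and exclude $g(p,q)<m(p,q)$ by applying Theorem~\ref{thm:main-criterion} with $r=g(p,q)$, which contradicts the classification. Your version only spells out what the paper leaves implicit, namely the congruence description of $m$ and the fact that $m\nmid g$ when $g<m$. One small caution: the construction printed for $(k,n)\equiv(0,2)\pmod 4$ in Proposition~\ref{prop:4k-4n-complete} needs a sign change (e.g.\ $u=-(4N+1)$ instead of $4N+1$) to actually give $\gcd=1$, but the corrected version does, so your argument is unaffected.
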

\begin{proof}
Sufficiency follows immediately from Theorem~\ref{thm:main-criterion}.

For necessity, Theorem \ref{thm:final-classification} and the explicit
constructions in the proofs of Propositions \ref{prop:odd-diagonal}, \ref{prop:mixed-cases} and \ref{prop:4k-4n-complete} show that $T$ is
representable as a difference of two squares in $UT_2(\Z)$ if and only if $p$
and $q$ are representable as differences of two squares in $\Z$ and $m(p, q)\mid r$,
where $m(p, q) = \gcd(a+d, x+u)$ for suitable representations $p = a^2-x^2$, $q = d^2-u^2$. Hence,
$$g(p,q)\le m(p,q).$$
Observe that $m(p,q)\in\{1,2,4\}$. If $m(p,q)=1$, then clearly $g(p,q)=m(p,q)$. If $m(p,q)\in\{2,4\}$, suppose that $g(p,q)<m(p,q)$.Then, by
Theorem  \ref{thm:final-classification}, the matrix
$$\begin{pmatrix}p&g(p,q)\\0&q\end{pmatrix}$$
would be representable as a difference of two squares in $UT_2(\Z)$, contradicting Theorem \ref{thm:final-classification}. Therefore $g(p,q) = m(p,q)$, and the claim follows.
\end{proof}

The results obtained in this paper suggest further questions concerning the existence of Diophantine $D(n)$-tuples in noncommutative rings such as $UT_2(\Z)$. Another natural direction would be to investigate representability as a difference of two squares in the full matrix ring $M_2(\Z)$. Note that in \cite{kan} the analogous problem for sums of two squares of matrices in $M_2(\Z)$ is studied. It is interesting that a complete characterization for matrices in $M_2(\Z)$ that are representable as sums of two squares in $M_2(\Z)$   is given purely in terms of congruence conditions modulo 4.

\section*{Appendix: Tables of representations modulo 4 and 16}

\noindent\textbf{Notation.} We write $(a,b,c)$ for the matrix 
$\begin{pmatrix} a & b \\ 0 & c \end{pmatrix}$.

\noindent\underline{Matrices representable as a difference of two squares in $UT_2(\Z_4)$:}

{\tiny
\begin{tabular}{ccccccccccc}
(0,0,0) & (0,0,1) & (0,0,3) & (0,1,0) & (0,1,1) & (0,1,3) & (0,2,0) & (0,2,1) & (0,2,3) & (0,3,0) & (0,3,1) \\
(0,3,3) & (1,0,0) & (1,0,1) & (1,0,3) & (1,1,0) & (1,1,3) & (1,2,0) & (1,2,1) & (1,2,3) & (1,3,0) & (1,3,3) \\
(3,0,0) & (3,0,1) & (3,0,3) & (3,1,0) & (3,1,1) & (3,2,0) & (3,2,1) & (3,2,3) & (3,3,0) & (3,3,1) & \\
\end{tabular}}

\noindent\underline{Matrices not representable as a difference of two squares in $UT_2(\Z_4)$:}

{\tiny
\begin{tabular}{ccccccccccc}
(0,0,2)&(0,1,2)&(0,2,2)&(0,3,2)&(1,0,2)&(1,1,1)&(1,1,2)&(1,2,2)&(1,3,1)&(1,3,2)&(2,0,0)\\
(2,0,1)&(2,0,2)&(2,0,3)&(2,1,0)&(2,1,1)&(2,1,2)&(2,1,3)&(2,2,0)&(2,2,1)&(2,2,2)&(2,2,3)\\
(2,3,0)&(2,3,1)&(2,3,2)&(2,3,3)&(3,0,2)&(3,1,2)&(3,1,3)&(3,2,2)&(3,3,2)&(3,3,3)&
\end{tabular}
}

\noindent\underline{Matrices representable as a difference of two squares in $UT_2(\Z_{16})$, and $4\mid a,c$:}

{
\begin{tiny}
\begin{tabular}{ccccccccc}
(0,0,0)&(0,0,4)&(0,0,8)&(0,0,12)&(0,1,0)&(0,1,4)&(0,1,8)&(0,1,12)&(0,2,0)\\
(0,2,4)&(0,2,8)&(0,2,12)&(0,3,0)&(0,3,4)&(0,3,8)&(0,3,12)&(0,4,0)&(0,4,4)\\
(0,4,8)&(0,4,12)&(0,5,0)&(0,5,4)&(0,5,8)&(0,5,12)&(0,6,0)&(0,6,4)&(0,6,8)\\
(0,6,12)&(0,7,0)&(0,7,4)&(0,7,8)&(0,7,12)&(0,8,0)&(0,8,4)&(0,8,8)&(0,8,12)\\
(0,9,0)&(0,9,4)&(0,9,8)&(0,9,12)&(0,10,0)&(0,10,4)&(0,10,8)&(0,10,12)&(0,11,0)\\
(0,11,4)&(0,11,8)&(0,11,12)&(0,12,0)&(0,12,4)&(0,12,8)&(0,12,12)&(0,13,0)&(0,13,4)\\
(0,13,8)&(0,13,12)&(0,14,0)&(0,14,4)&(0,14,8)&(0,14,12)&(0,15,0)&(0,15,4)&(0,15,8)\\
(0,15,12)&(4,0,0)&(4,0,4)&(4,0,8)&(4,0,12)&(4,1,0)&(4,1,8)&(4,2,0)&(4,2,8)\\
(4,2,12)&(4,3,0)&(4,3,8)&(4,4,0)&(4,4,4)&(4,4,8)&(4,4,12)&(4,5,0)&(4,5,8)\\
(4,6,0)&(4,6,8)&(4,6,12)&(4,7,0)&(4,7,8)&(4,8,0)&(4,8,4)&(4,8,8)&(4,8,12)\\
(4,9,0)&(4,9,8)&(4,10,0)&(4,10,8)&(4,10,12)&(4,11,0)&(4,11,8)&(4,12,0)&(4,12,4)\\
(4,12,8)&(4,12,12)&(4,13,0)&(4,13,8)&(4,14,0)&(4,14,8)&(4,14,12)&(4,15,0)&(4,15,8)\\
(8,0,0)&(8,0,4)&(8,0,8)&(8,0,12)&(8,1,0)&(8,1,4)&(8,1,12)&(8,2,0)&(8,2,4)\\
(8,2,8)&(8,2,12)&(8,3,0)&(8,3,4)&(8,3,12)&(8,4,0)&(8,4,4)&(8,4,8)&(8,4,12)\\
(8,5,0)&(8,5,4)&(8,5,12)&(8,6,0)&(8,6,4)&(8,6,8)&(8,6,12)&(8,7,0)&(8,7,4)\\
(8,7,12)&(8,8,0)&(8,8,4)&(8,8,8)&(8,8,12)&(8,9,0)&(8,9,4)&(8,9,12)&(8,10,0)\\
(8,10,4)&(8,10,8)&(8,10,12)&(8,11,0)&(8,11,4)&(8,11,12)&(8,12,0)&(8,12,4)&(8,12,8)\\
(8,12,12)&(8,13,0)&(8,13,4)&(8,13,12)&(8,14,0)&(8,14,4)&(8,14,8)&(8,14,12)&(8,15,0)\\
(8,15,4)&(8,15,12)&(12,0,0)&(12,0,4)&(12,0,8)&(12,0,12)&(12,1,0)&(12,1,8)&(12,2,0)\\
(12,2,4)&(12,2,8)&(12,3,0)&(12,3,8)&(12,4,0)&(12,4,4)&(12,4,8)&(12,4,12)&(12,5,0)\\
(12,5,8)&(12,6,0)&(12,6,4)&(12,6,8)&(12,7,0)&(12,7,8)&(12,8,0)&(12,8,4)&(12,8,8)\\
(12,8,12)&(12,9,0)&(12,9,8)&(12,10,0)&(12,10,4)&(12,10,8)&(12,11,0)&(12,11,8)&(12,12,0)\\
(12,12,4)&(12,12,8)&(12,12,12)&(12,13,0)&(12,13,8)&(12,14,0)&(12,14,4)&(12,14,8)&(12,15,0)\\
(12,15,8)&&&&&&&&
\end{tabular}
\end{tiny}}

\noindent\underline{Matrices not representable as a diff. of two squares in $UT_2(\Z_{16})$, and $4\mid a,c$:}

{
\begin{tiny}
\begin{tabular}{ccccccccc}
\begin{tabular}{cccccccc}
(4,1,4)&(4,1,12)&(4,2,4)&(4,3,4)&(4,3,12)&(4,5,4)&(4,5,12)&(4,6,4)\\
(4,7,4)&(4,7,12)&(4,9,4)&(4,9,12)&(4,10,4)&(4,11,4)&(4,11,12)&(4,13,4)\\
(4,13,12)&(4,14,4)&(4,15,4)&(4,15,12)&(8,1,8)&(8,3,8)&(8,5,8)&(8,7,8)\\
(8,9,8)&(8,11,8)&(8,13,8)&(8,15,8)&(12,1,4)&(12,1,12)&(12,2,12)&(12,3,4)\\
(12,3,12)&(12,5,4)&(12,5,12)&(12,6,12)&(12,7,4)&(12,7,12)&(12,9,4)&(12,9,12)\\
(12,10,12)&(12,11,4)&(12,11,12)&(12,13,4)&(12,13,12)&(12,14,12)&(12,15,4)&(12,15,12)\\
\end{tabular}
\end{tabular}
\end{tiny}}\\

The previous tables  support 
Proposition~\ref{p1}, Lemma~\ref{lemma:mod16-classification} and Corollary \ref{cor:nonrep-mod16}.  In the case modulo $16$, we restrict to matrices whose diagonal entries are divisible by $4$,   since only in this case do new obstructions arise that are not visible in the modulo $4$ classification.

\textbf{Acknowledgements.} The authors were supported by the Croatian Science Foundation under the
project no. IP-2022-10-5008 (TEBAG). The authors acknowledge support from the
project “Implementation of cutting-edge research and its application as part of the
Scientific Center of Excellence for Quantum and Complex Systems, and Representations of Lie Algebras”, Grant No. PK.1.1.10.0004, co-financed by the European
Union through the European Regional Development Fund – Competitiveness and
Cohesion Programme 2021-2027. This research was funded by the European union:
NextGenerationEU through the National Recovery and Resilience Plan 2021-2026. Institutional grant of University of Zagreb Faculty of Science (IK IA 1.1.3. Impact4Math).  The authors would like to thank Professors Borka Jadrijevi\' c and Tomislav Pejkovi\' c  for carefully reading the manuscript and for their useful comments and suggestions.

\end{document}